\documentclass[10pt]{amsart}
\usepackage{amsfonts}
\usepackage{ifthen}
\usepackage{amsthm}
\usepackage{amsmath}
\usepackage{graphicx}
\usepackage{amscd,amssymb,amsthm}
\usepackage{enumerate}
\usepackage{epstopdf}
\usepackage{mathrsfs}
\usepackage{hyperref}

\newcommand*\pFqskip{8mu}
\catcode`,\active
\newcommand*\F{\begingroup
        \catcode`\,\active
        \def ,{\mskip\pFqskip\relax}%
        \dopFq
}
\catcode`\,12
\def\dopFq#1#2#3{%
        \left(\genfrac..{0pt}{}{#1}{#2};#3\right)%
        \endgroup
}

\newcounter{minutes}\setcounter{minutes}{\time}
\divide\time by 60
\newcounter{hours}\setcounter{hours}{\time}
\multiply\time by 60 \addtocounter{minutes}{-\time}

\setlength{\paperwidth}{210mm} \setlength{\paperheight}{297mm}
\setlength{\oddsidemargin}{0mm} \setlength{\evensidemargin}{0mm}
\setlength{\topmargin}{-20mm} \setlength{\headheight}{10mm}
\setlength{\headsep}{13mm} \setlength{\textwidth}{160mm}
\setlength{\textheight}{240mm} \setlength{\footskip}{15mm}
\setlength{\marginparwidth}{0mm} \setlength{\marginparsep}{0mm}


\title{Properties of the Tur\'anian of modified Bessel functions}

\author[I. Mez\H o]{Istv\'an Mez\H o${\dag}$}
\address{Department of Mathematics, Nanjing University of Information Science and
Technology, 5 Panxin Rd, Pukou, Nanjing, Jiangsu, P.R. China}
\email{istvanmezo81@gmail.com}

\author[\'A. Baricz]{\'Arp\'ad Baricz${\ddag}$}
\address{Institute of Applied Mathematics, \'Obuda University, 1034 Budapest, Hungary}
\address{Department of Economics,  Babe\c{s}-Bolyai University, Cluj-Napoca 400591, Romania}
\email{bariczocsi@yahoo.com}

\thanks{${\dag}$The research of Istv\'an Mez\H{o} was supported by the Scientific Research Foundation of Nanjing University of Information Science \& Technology, the Startup Foundation for Introducing Talent of NUIST, Project no. S8113062001, and the National Natural Science Foundation for China, Grant no. 11501299.}

\thanks{${\ddag}$The research of \'Arp\'ad Baricz was supported by the J\'anos Bolyai Research Scholarship of the Hungarian Academy of Sciences.}

\newtheorem{theorem}{Theorem}
\newtheorem{corollary}{Corollary}



\begin{document}

\def\thefootnote{}
\footnotetext{ \texttt{File:~\jobname .tex,
          printed: \number\year-\number\month-0\number\day,
          \thehours.\ifnum\theminutes<10{0}\fi\theminutes}
} \makeatletter\def\thefootnote{\@arabic\c@footnote}\makeatother

\keywords{}
\subjclass[2010]{33C10.}

\maketitle

\begin{abstract}
In this paper some new series and integral representations for the Tur\'anian of modified Bessel functions of the first kind are given,
which give new asymptotic expansions and tight bounds for the Tur\'an determinant in the question. It is shown that in the case of natural
and real order the Tur\'anian can be represented in a relatively compact form, which yields a uniform upper bound for the Tur\'an determinant for
modified Bessel functions of the first kind. Our results complement and improve some of the results from the literature.
\end{abstract}

\section{Introduction}

Modified Bessel functions are among the most important special functions of mathematical physics. They appear in various problems of engineering sciences and mathematical physics. Because of this their properties have been studied from many different point of views by many researchers. Motivated by the rich literature on Tur\'an type inequalities for orthogonal polynomials and special functions the Tur\'an determinant of modified Bessel functions of the first kind have been also studied intensively. For more details we refer to the papers \cite{bpams,bams,bp,bpon,joshi,karp,thiru} and to the references therein. In this paper we focus on the modified Bessel functions of the first kind $I_{\nu}$ and especially on the series and integral representations and asymptotic behavior of the Tur\'anian $$\Delta_{\nu}(x)=I_{\nu}^2(x)-I_{\nu-1}(x)I_{\nu+1}(x).$$
This expression in the recent years appeared in various problems of applied mathematics, biology, chemistry, physics and
engineering sciences, and motivated by these applications some tight bounds were given for $\Delta_{\nu}(x)$ in \cite{baricz}. By using the asymptotic expansion $$I_{\nu}(x)\sim \frac{1}{\sqrt{2\pi\nu}}\left(\frac{ex}{2\nu}\right)^{\nu}$$
as $\nu\to\infty$ through positive real values with $x\neq0$ fixed, we clearly have that
\begin{equation}\label{deltasim}\Delta_{\nu}(x)\sim\frac{1}{2\pi\nu}\left(\frac{ex}{2\nu}\right)^{\nu}
\left(1-\frac{\nu^{2\nu+1}}{(\nu-1)^{\nu-\frac{1}{2}}(\nu+1)^{\nu+\frac{3}{2}}}\right),\end{equation}
which shows that the Tur\'anian of the modified Bessel function of the first kind tends to zero from above as $\nu$ tends to infinity through real values and fixed $x\neq0.$ This is in the agreement with the fact that the Tur\'anian $\Delta_{\nu}(x)$ is decreasing as a function of $\nu,$ see the last part of the third section. In the sequel our aim is to show that it is possible to find two more compact asymptotic expansions for $\Delta_{\nu}(x)$ by using some integral and series representations of the Tur\'anian. The first one is based on a new integral formula of Martin \cite{martin}, while the second one is based on the Neumann integral formula of the product of two modified Bessel functions of the first kind.

\section{New representations of the Tur\'anian $\Delta_{\nu}(x)$ for $\nu$ natural number}

\subsection{Series representation of the Tur\'anian $\Delta_n(x)$} In this section we are interested on the asymptotic behavior of $\Delta_n(x)$ as $x$ is fixed and $n$ tends to infinity. Our aim is to prove that for any fixed real $x$ and non-negative integer $n$
\begin{equation}\label{asymp}\Delta_n(x)\sim\frac{x^{2n}}{n!(n+1)!}\frac{1}{2^{2n}}\quad\mbox{as}\quad n\to\infty\end{equation}
holds. In fact, we give a full asymptotic description in the form of a convergent series in the variable $n$. In order to prove this result we need the following integral and series representation for $\Delta_n(x),$ from which \eqref{asymp} immediately follows.

\begin{theorem}\label{th1}
Let $n$ be a non-negative integer, and $x$ be a real number. Then
\begin{equation}\label{approx}\Delta_n(x)=\frac{(-1)^{n}}{\pi x}\int_{-\frac{\pi}{2}}^{\frac{\pi}{2}}\frac{I_1(2x\sin t)}{\sin t}\cos(2n t)dt=\sum_{m\geq n}C_{2m}^{m-n}\frac{\left(\frac{1}{2}x\right)^{2m}}{m!(m+1)!}.\end{equation}
\end{theorem}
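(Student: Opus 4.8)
The plan is to prove the two stated formulas separately and then note that the convergent series representation makes the asymptotic \eqref{asymp} immediate by isolating the leading term $m=n$.

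First I would establish the integral formula. The natural starting point is Martin's integral formula cited in the introduction, which gives an integral representation for the product $I_{\mu}(x)I_{\nu}(x)$ (or, in the relevant specialization, for $\Delta_{\nu}(x)$ directly). Concretely, one expects a formula of the shape $I_{\nu-1}(x)I_{\nu+1}(x)$ and $I_{\nu}^2(x)$ each admitting an integral over $[-\tfrac{\pi}{2},\tfrac{\pi}{2}]$ of $\tfrac{I_1(2x\sin t)}{\sin t}$ against a trigonometric kernel; taking the difference collapses the two kernels into $\cos(2nt)$ up to the factor $\tfrac{(-1)^n}{\pi x}$. If instead Martin's formula is stated for a single product of modified Bessel functions, I would apply it twice — once to $I_n^2(x)$ and once to $I_{n-1}(x)I_{n+1}(x)$ — and combine, using the product-to-sum identity for the cosines appearing in the two kernels so that the cross terms cancel and only the $\cos(2nt)$ term survives. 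The parity of the integrand in $t$ (the function $\tfrac{I_1(2x\sin t)}{\sin t}$ is even, and $\cos(2nt)$ is even) shows the integral is well defined and real, and the apparent singularity at $t=0$ is removable since $I_1(y)/y\to \tfrac12$ as $y\to 0$.

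Next I would derive the series formula, and this is where I expect the main work to lie. I would expand $I_1(2x\sin t)/\sin t$ in its power series: $\tfrac{I_1(2x\sin t)}{\sin t}=\sum_{k\ge 0}\tfrac{(x)^{2k+1}(\sin t)^{2k}}{k!(k+1)!}$ — wait, more carefully $I_1(y)=\sum_{k\ge0}\tfrac{(y/2)^{2k+1}}{k!(k+1)!}$, so with $y=2x\sin t$ one gets $\tfrac{I_1(2x\sin t)}{\sin t}=\sum_{k\ge0}\tfrac{x^{2k+1}(\sin t)^{2k}}{k!(k+1)!}$. Substituting into the integral and interchanging sum and integral (justified by uniform convergence on the compact interval), the problem reduces to evaluating $\int_{-\pi/2}^{\pi/2}(\sin t)^{2k}\cos(2nt)\,dt$. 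This is a classical Fourier-type integral whose value is a ratio of factorials (essentially a central binomial coefficient divided by $2^{2k}$, times a sign), and substituting it back, reindexing $k=m$ and matching powers of $x$, should produce exactly $\sum_{m\ge n}C_{2m}^{m-n}\tfrac{(x/2)^{2m}}{m!(m+1)!}$ after the factor $\tfrac{(-1)^n}{\pi x}$ is absorbed — the $(-1)^n$ cancelling a sign from the trigonometric integral, the $1/x$ cancelling one power of $x$, and the $1/\pi$ cancelling the normalization of $\int(\sin t)^{2k}\cos(2nt)\,dt$.

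An alternative, and perhaps cleaner, route for the series identity is to verify it directly from the known Maclaurin series $I_{\nu}(x)=\sum_{j\ge0}\tfrac{(x/2)^{2j+\nu}}{j!\,\Gamma(j+\nu+1)}$: form $\Delta_n(x)=I_n^2(x)-I_{n-1}(x)I_{n+1}(x)$ as a Cauchy product, collect the coefficient of $(x/2)^{2m}$ (only even powers survive, and the lowest is $2n$), and show that the resulting finite sum over $j$ telescopes or simplifies — via a Vandermonde-type identity — to $C_{2m}^{m-n}/(m!(m+1)!)$. I would probably present this computation as the backbone and use the integral formula as the bridge to Martin's work. The main obstacle in either approach is the combinatorial simplification: showing that the inner sum of products of reciprocal factorials collapses to a single central-binomial term. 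I expect this to follow from the Chu–Vandermonde identity after pulling out the common factor $\tfrac{1}{m!(m+1)!}$, but verifying the index bookkeeping (especially the shift by $\pm1$ in the orders $n\pm1$, which is what forces the numerator to be the difference $I_n^2-I_{n-1}I_{n+1}$ rather than something that vanishes) is the delicate step. Finally, convergence of the series for all real $x$ is clear from the factorial denominators, and extracting the $m=n$ term (with $C_{2n}^{0}=1$) gives \eqref{asymp} at once.
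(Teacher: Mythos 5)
Your proposal is correct and follows essentially the same route as the paper: the first equality is obtained from Martin's Fourier-integral formula for $J_n^2-J_{n-1}J_{n+1}$ (converted to modified Bessel functions via $I_n(x)=i^{-n}J_n(ix)$ and a change of variable), and the series then comes from expanding $I_1(2x\sin t)/\sin t$ termwise and evaluating $\int_{-\pi/2}^{\pi/2}\sin^{2m}t\,\cos(2nt)\,dt$ as $(-1)^n\pi\,2^{-2m}C_{2m}^{m-n}$, exactly as you anticipate. Your alternative Cauchy-product/Vandermonde route is also sound, and is in fact the method the paper uses later (Theorem \ref{th3}) to extend the series representation to real order $\nu$.
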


\begin{proof} By definition,
\begin{equation}
I_n(x)=i^{-n}J_n(ix),\label{IJrel}
\end{equation}
where $J_n$ is the Bessel function of the first kind. From here we get that
\[\Delta_n(x)=(-1)^{-n}\left(J_n^2(ix)-J_{n-1}(ix)J_{n+1}(ix)\right).\]
By a result of Martin \cite[eq. (21)]{martin} the right-hand side of the above expression can be expressed as a Fourier integral for integer $n$, that is,
\[J_n^2(x)-J_{n-1}(x)J_{n+1}(x)=\frac{1}{2\pi}\int_{-\pi}^\pi\frac{J_1(2x\sin(t/2))}{x\sin(t/2)}e^{in t}dt.\]
Replacing $x$ by $ix$ on both sides and recalling \eqref{IJrel}, a transformation of variables ($t\to t/2$) yields the first result in \eqref{approx}. Note that we also separated the real part, hence the exponential turns to be cosine in the statement of the theorem.

Now, let us focus on the second equality in \eqref{approx}. We apply the integral representation
\[\Delta_n(x)=\frac{(-1)^{n}}{\pi x}\int_{-\frac{\pi}{2}}^{\frac{\pi}{2}}\frac{I_1(2x\sin t)}{\sin t}\cos(2n t)dt\]
together with the MacLaurin expansion of $I_1$
\[I_1(2x\sin t)=\sum_{m\geq0}\frac{\left(x\sin t\right)^{2m+1}}{m!(m+1)!}.\]
Substituting this into the integral formula, we get that
\begin{align*}\Delta_n(x)&=\frac{(-1)^{n}}{\pi x}\sum_{m\geq0}\frac{x^{2m+1}}{m!(m+1)!}\int_{-\frac{\pi}{2}}^{\frac{\pi}{2}}\frac{\sin^{2m+1}(t)\cos(2n t)}{\sin t}dt\\
&=\sum_{m\geq0}\frac{x^{2m}}{m!(m+1)!}\frac{(-1)^{n}}{\pi}\int_{-\frac{\pi}{2}}^{\frac{\pi}{2}}\sin^{2m}(t)\cos(2n t)dt=\sum_{m\geq0}\frac{x^{2m}}{m!(m+1)!}T_n(m),\end{align*}
where
\begin{align*}
T_n(m)&=\frac{(-1)^n}{\pi}\int_{-\frac{\pi}{2}}^{\frac{\pi}{2}}\sin^{2m}(t)\cos(2n t)dt\\
&=\frac{(-1)^n}{\pi}\int_{0}^{\pi}\cos^{2m}(u)\cos(2n u)\cos(n\pi)du\\
&=\frac{2}{\pi}\int_{0}^{\frac{\pi}{2}}\cos^{2m}(u)\cos(2n u)du\\
&=\frac{1}{2^{2m}}\frac{\left(B(m+n+1,m-n+1)\right)^{-1}}{2m+1}=\frac{1}{2^{2m}}C_{2m}^{m-n},
\end{align*}
according to \cite[eq. 5.12.5]{nist} and by using the fact that the graph of the function $u\mapsto \cos^{2m}u\cos(2nu)$ is symmetric with respect to the vertical line $x=\pi/2.$ Note that $T_n(m)$ vanishes when $m<n$. The first two non-vanishing values are
\[T_n(n)=\frac{1}{2^{2n}}\quad \mbox{and}\quad T_n(n+1)=\frac{n+1}{2^{2n+1}},\]
and the proof of the asymptotic expansion \eqref{asymp} is done, taking the first term approximation in \eqref{approx}.
\end{proof}

\subsection{Bounds for the Tur\'anian $\Delta_n(x)$} We know that for $\nu>-1$ and $x>0$ we have $\Delta_\nu(x)\ge0,$ see \cite{bpams,bams,baricz,bpon,joshi,karp,thiru} for more details. A better lower bound easily comes at least for non-negative integer $\nu=n$ after realizing that all the $T_n(m)$ integrals above are positive in \eqref{approx}. Hence for $n\in\mathbb{N}$ and $x>0$ we have
\begin{equation}\label{turan1}\Delta_n(x)\ge\frac{1}{4^n}\frac{x^{2n}}{n!(n+1)!}\end{equation}
as well as the even sharper two term cut of \eqref{approx}
\begin{equation}\label{turan2}\Delta_n(x)\ge\frac{1}{4^n}\frac{x^{2n}}{n!(n+1)!}+\frac{1}{2^{2n+1}}\frac{x^{2(n+1)}}{n!(n+2)!}.\end{equation}
It is clear that it is possible to have more sharper lower bound for $\Delta_n(x)$ by taking more terms of the series on the right-hand side of \eqref{approx}. We also mention that the Tur\'an type inequality \eqref{turan1} is not new, it was proved first by Kalmykov and Karp \cite[eq. (26)]{karp}, and it is actually
valid in the following form
\begin{equation}\label{turan3}\Delta_{\nu}(x)\ge\frac{1}{4^\nu}\frac{x^{2\nu}}{(\nu+1)\Gamma^2(\nu+1)},\end{equation}
where $\nu>-1$ and $x>0.$ Taking into account this it is natural to ask whether the Tur\'an type inequality \eqref{turan2} and the relations in \eqref{approx}
are valid for arbitrary real parameter $\nu>-1.$ An affirmative answer to this question will be given in section 2.

Now, our goal is to find an upper estimation by using our series representation in \eqref{approx}.

\begin{theorem}
For $n$ positive integer we have
\begin{equation}
\Delta_n(x)\le \rho_n\frac{x^{2n}}{n!(n+1)!}{}_1F_2(1;1+n,2+n;x^2),\label{Festim}
\end{equation}
where $$\rho_n=\sup\limits_{m\geq n}T_n(m)=\sup_{m\geq n}\frac{1}{2^{2m}}C_{2m}^{m-n}=\frac{1}{2^{4n^2-2}}\frac{(4n^2-2)!}{(2n^2+n-1)!(2n^2-n-1)!}.$$
\end{theorem}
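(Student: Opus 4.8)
The plan is to exploit the series representation of Theorem~\ref{th1} together with a crude termwise bound. Recall from the proof of that theorem that
\[\Delta_n(x)=\sum_{m\geq n}\frac{x^{2m}}{m!(m+1)!}\,T_n(m),\qquad T_n(m)=\frac{1}{2^{2m}}C_{2m}^{m-n}.\]
For real $x$ every summand is non-negative, since $x^{2m}\geq0$ and $T_n(m)\geq0$, so one may replace each factor $T_n(m)$ by $\rho_n=\sup_{m\geq n}T_n(m)$ to obtain
\[\Delta_n(x)\leq\rho_n\sum_{m\geq n}\frac{x^{2m}}{m!(m+1)!}.\]
Two things then remain: to recognize this series, and to evaluate $\rho_n$ in closed form.

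For the first, I would substitute $m=n+k$ and use the Pochhammer identities $(1+n)_k=(n+k)!/n!$ and $(2+n)_k=(n+k+1)!/(n+1)!$, together with $(1)_k=k!$, to write
\[\sum_{m\geq n}\frac{x^{2m}}{m!(m+1)!}=\frac{x^{2n}}{n!(n+1)!}\sum_{k\geq0}\frac{(x^2)^k}{(1+n)_k(2+n)_k}=\frac{x^{2n}}{n!(n+1)!}\,{}_1F_2(1;1+n,2+n;x^2),\]
which, combined with the previous display, gives \eqref{Festim}.

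For the second, since $T_n(n)=2^{-2n}>0$ and $T_n(m)\to0$ as $m\to\infty$ (indeed $C_{2m}^{m-n}/4^m\sim C_{2m}^m/4^m\sim(\pi m)^{-1/2}$), the supremum is attained at some finite index, so it suffices to track the ratio of consecutive terms. Cancelling factorials yields
\[\frac{T_n(m+1)}{T_n(m)}=\frac{(2m+1)(m+1)}{2\big((m+1)^2-n^2\big)},\]
and a short calculation shows this quantity is $\geq1$ precisely for $m\leq 2n^2-1$ and $<1$ for $m>2n^2-1$. Hence $m\mapsto T_n(m)$ attains its maximum at $m=2n^2-1$ (equivalently at $m=2n^2$), and evaluating $T_n(2n^2-1)=\tfrac{1}{2^{4n^2-2}}C_{4n^2-2}^{2n^2-n-1}$ produces the stated closed form for $\rho_n$.

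The whole argument is essentially bookkeeping; the one place that calls for a genuine idea is the discrete monotonicity analysis of the integer sequence $T_n$ — locating its peak at $m=2n^2-1$ — but even that reduces to the one-line inequality coming from the simplified ratio above.
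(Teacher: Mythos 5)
Your proposal is correct and follows essentially the same route as the paper: bound each coefficient $T_n(m)$ by $\rho_n$ in the series from Theorem \ref{th1}, identify the remaining series as $\frac{x^{2n}}{n!(n+1)!}{}_1F_2(1;1+n,2+n;x^2)$, and locate the peak of the unimodal sequence $T_n(m)$ at $m=2n^2-1$ via the same consecutive-term ratio. The only (harmless) addition is your explicit remark that $T_n(2n^2-1)=T_n(2n^2)$, which the paper leaves implicit.
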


\begin{proof}
We make use of our series representation \eqref{approx} for $\Delta_n(x)$
\[\Delta_n(x)=\sum_{m\geq n}\frac{x^{2m}}{m!(m+1)!}T_n(m)\le \rho_n\sum_{m\geq n}\frac{x^{2m}}{m!(m+1)!}=\rho_n\frac{x^{2n}}{n!(n+1)!}{}_1F_2(1;1+n,2+n;x^2).\]
To finish the proof we show that $T_n(m)$ for a fixed $n$ grows to a certain $m$ and then it steadily decreases. Thus, $\rho_n$ is actually a maximum with a given finite $n$. To prove this we show that $\left\{T_n(m)\right\}_{m\geq n}$ is a unimodal sequence. Indeed, since
$$\frac{T_n(m+1)}{T_n(m)}=\frac{1}{2}\frac{(2m+1)(m+1)}{(m+n+1)(m-n+1)}\gtrless1$$ if and only if $m\lessgtr2n^2-1,$ it follows that the sequence $\left\{T_n(m)\right\}_{m\geq n}$ and it takes its maximum at $m=2n^2-1.$
\end{proof}

\subsection{Some remarks on the numerical performance of the bounds} Since all of our bounds in the previous subsection were deduced from the still exact \eqref{approx}, a short check reveals that our bounds are getting sharper as $x$ is smaller or $n$ is bigger. It is also worth to compare \eqref{Festim} with existing results. A known upper bound for $\Delta_n(x)$ says that for $n\in\mathbb{N}$ and $x\in\mathbb{R}$ we have \cite{baricz,joshi,thiru}
\begin{equation}
\Delta_n(x)\le\frac{1}{n+1}I_n^2(x).\label{Arpi}
\end{equation}
Our numerical simulations show that this inequality is sharper than \eqref{Festim} for small $x$, however, for moderate $x$ and bigger, \eqref{Festim} starts to perform better than \eqref{Arpi}. Then, as $n$ grows, the right hand side of \eqref{Arpi} starts to be closer to $\Delta_n(x)$ than our estimation.

\subsection{Another bound for $\Delta_n(x)$}

In principle, knowing the $m$-dependent asymptotic behavior of $T_n(m)$ could give a better approximation than that of a simple use of $\rho_n=\sup_mT_n(m)$ as we did in the last subsections. It turns out, however, that $T_n(m)$, as $m$ grows, has a so weak dependence of the variable $n$ than in the first order approximation it does not appear at all. Namely, we are first going to prove that

\begin{theorem}\label{TasympProp}We have that
\begin{equation}
T_n(m)\sim\frac{1}{\sqrt{\pi}}\frac{1}{\sqrt{m}}\quad\mbox{as}\quad m\to\infty.\label{Tasymp}
\end{equation}
\end{theorem}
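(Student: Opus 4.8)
The plan is to extract the asymptotics of $T_n(m)$ directly from the closed form $T_n(m)=\frac{1}{2^{2m}}C_{2m}^{m-n}=\frac{1}{2^{2m}}\binom{2m}{m-n}$ obtained in Theorem~\ref{th1}. The key point, as the remark preceding the statement suggests, is that the $n$-dependence washes out to leading order: one writes
\[
\frac{1}{2^{2m}}\binom{2m}{m-n}=\frac{1}{2^{2m}}\binom{2m}{m}\cdot\frac{\binom{2m}{m-n}}{\binom{2m}{m}},
\]
and observes that $\frac{\binom{2m}{m-n}}{\binom{2m}{m}}=\prod_{j=1}^{n}\frac{m-j+1}{m+j}\to1$ as $m\to\infty$ for fixed $n$ (indeed it is $1+O(1/m)$). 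So the whole problem reduces to the classical central binomial estimate $\frac{1}{2^{2m}}\binom{2m}{m}\sim\frac{1}{\sqrt{\pi m}}$.

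The steps, in order, would be: (1) recall $T_n(m)=2^{-2m}\binom{2m}{m-n}$ from the proof of Theorem~\ref{th1}; (2) factor out the central binomial coefficient and note the ratio $\binom{2m}{m-n}/\binom{2m}{m}$ tends to $1$ for fixed $n$ as $m\to\infty$ (a finite product of factors each $\to1$); (3) invoke Stirling's formula, or equivalently the Wallis/Gamma identity $\binom{2m}{m}=\frac{4^m}{\sqrt{\pi m}}\bigl(1+O(1/m)\bigr)$, to get $2^{-2m}\binom{2m}{m}\sim\frac{1}{\sqrt{\pi m}}$; (4) combine (2) and (3) to conclude $T_n(m)\sim\frac{1}{\sqrt{\pi m}}=\frac{1}{\sqrt{\pi}}\frac{1}{\sqrt{m}}$. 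Alternatively, one could feed the Beta-function form $T_n(m)=\frac{1}{2^{2m}}\frac{\bigl(B(m+n+1,m-n+1)\bigr)^{-1}}{2m+1}$ into Stirling directly: $B(m+n+1,m-n+1)=\frac{\Gamma(m+n+1)\Gamma(m-n+1)}{\Gamma(2m+2)}$, and a routine Stirling expansion of the three Gamma factors gives the same leading term, with the $n$-contributions cancelling in the exponent.

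I do not expect a genuine obstacle here; this is a short computation rather than a deep argument. The only mild care needed is to make the $n$-dependence cancellation rigorous, i.e. to justify that for each fixed $n$ the correction factor is $1+O_n(1/m)$ uniformly enough to pass to the limit — but since $n$ is held fixed while $m\to\infty$, this is immediate from writing the ratio as an explicit finite product $\prod_{j=1}^{n}\frac{m-j+1}{m+j}$. If one wanted the full asymptotic series (as the paper does elsewhere), one would keep more terms in Stirling and in the product expansion, but for the stated first-order result the above suffices.
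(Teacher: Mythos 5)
Your proof is correct, but it takes a genuinely different route from the paper. The paper does not touch the closed form $T_n(m)=2^{-2m}C_{2m}^{m-n}$ at this point; instead it forms the generating function $\sum_{m\geq0}T_n(m)x^m$, evaluates it as $\frac{(-1)^n}{1-x}\frac{\sin(\pi n)}{\pi n}\,{}_3F_2$ of argument $x/(x-1)$, locates the algebraic singularity at $x=1$ with dominant part $1/\sqrt{1-x}$, and invokes Darboux's method to transfer $[x^m]\frac{1}{\sqrt{1-x}}\sim\frac{1}{\sqrt{\pi m}}$ to $T_n(m)$. Your argument --- factoring $2^{-2m}\binom{2m}{m-n}=2^{-2m}\binom{2m}{m}\cdot\prod_{j=1}^{n}\frac{m-j+1}{m+j}$ and applying the central binomial estimate --- is shorter, entirely elementary, and fully rigorous: the finite product is manifestly $1+O_n(1/m)$, and Stirling does the rest. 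It also sidesteps two delicate points in the paper's route, namely the removable singularity of the $\sin(\pi n)/(\pi n)$ factor at integer $n$ and the justification of the local expansion at $x=1$ needed for Darboux. What the paper's heavier machinery buys is the finer singular structure (the $-2n-2n^2\sqrt{1-x}$ terms), which exhibits how the $n$-dependence enters only at subleading order; your approach would recover the same information by keeping more terms in Stirling and in the expansion of the finite product, as you note. For the stated leading-order result, your proof is the more economical one.
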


From here it follows that $T_n(m)\le c\frac{1}{\sqrt{m}}$ uniformly for all $m$ with some constant $c,$ independent from $n$. Hence we can deduce the following.
\begin{corollary}For any positive integer $n$
\begin{equation}
\Delta_n(x)\le c\sum_{m\geq n}\frac{x^{2m}}{m!(m+1)!}\frac{1}{\sqrt{m}}.\label{Delta_and_c}
\end{equation}
\end{corollary}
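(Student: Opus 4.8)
The plan is to read the inequality off directly from the exact series representation \eqref{approx} established in Theorem \ref{th1}, combined with the uniform estimate $T_n(m)\le c/\sqrt{m}$ extracted from Theorem \ref{TasympProp}. First I would recall that, by \eqref{approx}, for any positive integer $n$ and any real $x$ one has
\[\Delta_n(x)=\sum_{m\geq n}\frac{x^{2m}}{m!(m+1)!}\,T_n(m),\qquad T_n(m)=\frac{1}{2^{2m}}C_{2m}^{m-n}.\]
The key structural observation is that every summand is non-negative: since $x$ is real we have $x^{2m}\geq0$, the factorials are positive, and $T_n(m)\geq0$ because it is a binomial coefficient divided by $2^{2m}$. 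This sign information is exactly what will make the term-by-term majorization legitimate and inequality-preserving.

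Next I would invoke Theorem \ref{TasympProp}. The asymptotic $T_n(m)\sim(\pi m)^{-1/2}$ as $m\to\infty$, together with the fact recorded just before the statement that the constant may be taken independent of $n$, gives $T_n(m)\le c\,m^{-1/2}$ for all $m\geq n$ and all positive integers $n$, with a single universal $c$. Because each coefficient $\frac{x^{2m}}{m!(m+1)!}$ is non-negative, I may replace $T_n(m)$ by its upper bound $c\,m^{-1/2}$ inside the sum without reversing the inequality, obtaining
\[\Delta_n(x)\le\sum_{m\geq n}\frac{x^{2m}}{m!(m+1)!}\,\frac{c}{\sqrt{m}}=c\sum_{m\geq n}\frac{x^{2m}}{m!(m+1)!}\,\frac{1}{\sqrt{m}},\]
which is precisely \eqref{Delta_and_c}. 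Convergence of the majorant is automatic, since $m^{-1/2}\leq1$ for $m\geq1$ and $\sum_{m\geq0}\frac{x^{2m}}{m!(m+1)!}$ is an entire function of $x$; hence the bounding series is dominated by a convergent one and the manipulation is fully justified.

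Since the non-negative term-by-term comparison is the entire content once the ingredients are assembled, the only genuine obstacle is the uniformity of the constant $c$ in $n$. The asymptotic in Theorem \ref{TasympProp} is stated for fixed $n$ as $m\to\infty$, so passing to a bound valid simultaneously for all $m\geq n$ and all $n$ requires that the convergence of $T_n(m)$ toward $(\pi m)^{-1/2}$ be controlled uniformly in $n$. As this uniform estimate is exactly what is asserted in the sentence preceding the corollary, I would treat it as established and the remainder reduces to the routine non-negative majorization displayed above.
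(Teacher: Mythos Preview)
Your proof is correct and follows exactly the route the paper takes: the corollary is stated there as an immediate consequence of the series representation \eqref{approx} together with the uniform bound $T_n(m)\le c/\sqrt{m}$ announced just before the statement, and your termwise non-negative majorization is precisely that deduction made explicit. Your remark on the uniformity of $c$ in $n$ correctly identifies the one nontrivial input, which the paper also defers to the surrounding discussion (and ultimately sharpens to $c=1/\sqrt{\pi}$ in the next theorem).
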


\begin{proof}(of Theorem \ref{TasympProp}.)
The generating function of $T_n(m)$ is
\[\sum_{m\geq0} T_n(m)x^m=\frac{(-1)^n}{\pi}\int_{-\frac{\pi}{2}}^{\frac{\pi}{2}}\cos(2n t)\sum_{m\geq0}\sin^{2m}(t)x^mdt=\frac{(-1)^n}{\pi}\int_{-\frac{\pi}{2}}^{\frac{\pi}{2}}\frac{\cos(2n t)}{1-x\sin^2t}.\]
This integral can be evaluated in terms of hypergeometric functions. It results that
\begin{equation}
\sum_{m\geq0} T_n(m)x^m=\frac{(-1)^n}{1-x}\frac{\sin(\pi n)}{\pi n}{}_3F_2\F{1/2,1,1}{1-n,1+n}{\frac{x}{x-1}}.\label{hypergeom}
\end{equation}
Although the sine factor is singular for integer $n$ the whole expression is not. What is important for us is the pole structure of the right-hand side. If we know this, we can use the standard Darboux method. It can be seen that the right-hand side is singular at $x=1$, and there are no poles of smaller modulus, just note that the integral
\[\int_{-\frac{\pi}{2}}^{\frac{\pi}{2}}\frac{\cos(2n t)}{1-x\sin^2t}\]
is convergent for $|x|<1$. Analyzing the series expansion of the right hand side of \eqref{hypergeom} at the pole $x=1$ it comes that
\[\frac{(-1)^n}{1-x}\frac{\sin(\pi n)}{\pi n}{}_3F_2\F{1/2,1,1}{1-n,1+n}{\frac{x}{x-1}}=\frac{1}{\sqrt{1-x}}-2n-2n^2\sqrt{1-x}+\mathcal{O}(1-x).\]
This observation revels that $x=1$ is an algebraic singularity, and $T_n(m)$ behaves asymptotically like the coefficients in $\frac{1}{\sqrt{1-x}}$. The latter is known as
\[[x^m]\frac{1}{\sqrt{1-x}}=\frac{1}{\sqrt\pi}\frac{1}{\sqrt m}.\]
Our result follows as we stated.
\end{proof}

The constant $c$ in \eqref{Delta_and_c} can explicitly be determined as the following statement shows.

\begin{theorem}For any positive integer $n$ and real $x$
$$\Delta_n(x)\le \frac{1}{\sqrt{\pi}}\sum_{m\geq n}\frac{x^{2m}}{m!(m+1)!}\frac{1}{\sqrt{m}}.$$
\end{theorem}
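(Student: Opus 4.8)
The plan is to push the termwise bound behind the Corollary to its sharp form. Starting from the exact series representation in \eqref{approx},
\[\Delta_n(x)=\sum_{m\geq n}\frac{x^{2m}}{m!(m+1)!}\,T_n(m),\qquad T_n(m)=\frac{1}{2^{2m}}C_{2m}^{m-n},\]
and noting that every coefficient $x^{2m}/\bigl(m!(m+1)!\bigr)$ is non-negative for real $x$, it is enough to prove the uniform estimate
\[T_n(m)\leq\frac{1}{\sqrt{\pi}}\,\frac{1}{\sqrt{m}}\qquad\text{for all integers }m\geq n\geq1,\]
after which the asserted inequality follows by summing over $m\geq n$ (both series converge, being majorized by the relevant hypergeometric/$I$-series).

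To obtain this estimate I would first remove the parameter $n$: since $0\leq m-n\leq m$, the binomial coefficient $C_{2m}^{m-n}$ is dominated by the central one, so
\[T_n(m)=\frac{1}{2^{2m}}C_{2m}^{m-n}\leq\frac{1}{2^{2m}}C_{2m}^{m}.\]
Everything then reduces to the classical inequality $C_{2m}^{m}\leq 4^{m}/\sqrt{\pi m}$, valid for every integer $m\geq1$. To settle this I would put $b_m=\sqrt{m}\,C_{2m}^{m}/4^{m}$ and compute
\[\frac{b_{m+1}}{b_m}=\frac{C_{2m+2}^{m+1}}{4\,C_{2m}^{m}}\sqrt{\frac{m+1}{m}}=\frac{2m+1}{2\sqrt{m(m+1)}}>1,\]
the inequality holding because $(2m+1)^2=4m^2+4m+1>4m(m+1)$. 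Hence $\{b_m\}_{m\geq1}$ is strictly increasing, while the Wallis product (equivalently, Stirling's formula) gives $C_{2m}^{m}\sim 4^{m}/\sqrt{\pi m}$, i.e. $b_m\to 1/\sqrt{\pi}$; a strictly increasing sequence with limit $1/\sqrt{\pi}$ stays below that value, so $b_m<1/\sqrt{\pi}$ and therefore $C_{2m}^{m}<4^{m}/\sqrt{\pi m}$. Combining the two displays yields $T_n(m)\leq 1/\sqrt{\pi m}$ for $m\geq n\geq1$, and summing against the non-negative coefficients finishes the argument. In particular this shows that the constant $c$ in \eqref{Delta_and_c} may be taken as $c=1/\sqrt{\pi}$, and because $T_n(m)\sqrt{m}\to 1/\sqrt{\pi}$ by Theorem \ref{TasympProp}, no smaller constant works in a bound uniform in $n$.

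The only non-elementary input is the value of the limit of $b_m$, that is, the Wallis/Stirling asymptotics for the central binomial coefficient; the rest is the monotonicity computation above together with bookkeeping for non-negative series, so there is no convergence or interchange issue to address. I do not expect any genuine obstacle here: the whole difficulty is concentrated in recognizing that $T_n(m)$ is uniformly bounded by $1/\sqrt{\pi m}$ rather than merely asymptotic to it, which the monotonicity of $b_m$ delivers at once. (The degenerate case $x=0$, where both sides vanish for $n\geq1$, needs no separate comment.)
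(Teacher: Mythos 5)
Your proof is correct and follows essentially the same route as the paper's: both arguments reduce to showing that a $\sqrt{m}$-normalized sequence increases to the limit $1/\sqrt{\pi}$, via the same ratio inequality $(2m+1)^2>4m(m+1)$. The one (minor but welcome) difference is that by first dominating $C_{2m}^{m-n}$ by the central coefficient $C_{2m}^{m}$ you only need the classical Wallis/Stirling asymptotic $C_{2m}^{m}\sim 4^m/\sqrt{\pi m}$ rather than invoking the $n$-dependent limit of Theorem \ref{TasympProp}, which makes your version slightly more self-contained.
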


\begin{proof}From the definition of the constant $c$ (see after \eqref{Tasymp}) we have that it can be chosen as
\[c=\sup_{m\ge n}\sqrt{m}\cdot T_n(m).\]
Therefore it is enough to find this supremum (as we will see, it is independent from $n$). We are going to show that the sequence $\left\{\sqrt{m}T_n(m)\right\}_{m\geq n}$ is strictly increasing, so the supremum will be in fact the limit
\[c=\lim_{m\ge n}\sqrt{m}\cdot T_n(m).\]
As we saw earlier, $T_n(m)=2^{-2m}C_{2m}^{m-n}$, from where we get that
\[\frac{\sqrt{m+1}T_n(m+1)}{\sqrt{m}T_n(m)}=\frac{(1+m)^{3/2}(2m+1)}{2\sqrt{m}(1+m-n)(1+m+n)}.\]
In order to prove that the sequence $\left\{\sqrt{m}T_n(m)\right\}_{m\geq n}$ is strictly increasing, we must show that the right hand side is not less than 1. That is, we need to prove that
\[\frac{(1+m)^{3/2}(2m+1)}{2\sqrt{m}}\ge(1+m)^2-n^2.\]
The stronger inequality
\[\frac{(1+m)^{3/2}(2m+1)}{2\sqrt{m}}\ge(1+m)^2\]
can also be proven in an elementary way. That $\sqrt{m}T_n(m)\to\frac{1}{\sqrt{\pi}}$ as $m\to\infty$ comes from Theorem \ref{TasympProp}, and hence the proof is complete.
\end{proof}

\section{Series representation of the Tur\'anian $\Delta_{\nu}(x)$ for $\nu$ real number}
\setcounter{equation}{0}

\subsection{Integral representation of $\Delta_{\nu}(x)$} Now, we are going to show the general result of \eqref{approx} in Theorem \ref{th1}.

\begin{theorem}\label{th3}
For $\nu>-\frac{1}{2}$ and $x\in\mathbb{R}$ we have the following representations
\begin{equation}\label{serint}\Delta_{\nu}(x)=\frac{4}{\pi}\int_0^{\frac{\pi}{2}}I_{2\nu}(2x\cos\theta)\sin^2\theta d\theta=\frac{1}{\sqrt{\pi}}\sum_{m\geq0}\frac{\Gamma\left(\nu+m+\frac{1}{2}\right)x^{2\nu+2m}}{m!\Gamma(\nu+m+2)\Gamma(2\nu+m+1)}.\end{equation}
Moreover, the series representation is valid for all $\nu>-1$ and $x\in\mathbb{R}.$
\end{theorem}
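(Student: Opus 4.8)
The plan is to derive the integral representation from a classical product formula and then to unwind that integral into the series, widening the range of the parameter only at the very end. For the first identity in \eqref{serint} I would invoke Neumann's integral formula for a product of two modified Bessel functions of the first kind: for $\Re(\alpha+\beta)>-1$,
\[
I_{\alpha}(x)I_{\beta}(x)=\frac{2}{\pi}\int_{0}^{\frac{\pi}{2}}I_{\alpha+\beta}(2x\cos\theta)\cos\bigl((\alpha-\beta)\theta\bigr)\,d\theta .
\]
Applying this with $\alpha=\beta=\nu$ gives $I_{\nu}^{2}(x)=\tfrac{2}{\pi}\int_{0}^{\pi/2}I_{2\nu}(2x\cos\theta)\,d\theta$, and with $\alpha=\nu+1,\ \beta=\nu-1$ it gives $I_{\nu-1}(x)I_{\nu+1}(x)=\tfrac{2}{\pi}\int_{0}^{\pi/2}I_{2\nu}(2x\cos\theta)\cos(2\theta)\,d\theta$; in both uses the admissibility hypothesis $\Re(\alpha+\beta)>-1$ reads $2\nu>-1$, that is, $\nu>-\tfrac12$. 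Subtracting the two identities and using $1-\cos 2\theta=2\sin^{2}\theta$ then yields
\[
\Delta_{\nu}(x)=\frac{2}{\pi}\int_{0}^{\frac{\pi}{2}}I_{2\nu}(2x\cos\theta)\,(1-\cos 2\theta)\,d\theta=\frac{4}{\pi}\int_{0}^{\frac{\pi}{2}}I_{2\nu}(2x\cos\theta)\sin^{2}\theta\,d\theta ,
\]
which is the first representation.

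Next I would feed the MacLaurin expansion $I_{2\nu}(2x\cos\theta)=\sum_{m\ge0}\frac{(x\cos\theta)^{2\nu+2m}}{m!\,\Gamma(2\nu+m+1)}$ into this integral and integrate term by term. For $x>0$ all terms are nonnegative and $\sin^{2}\theta\,I_{2\nu}(2x\cos\theta)$ is dominated by a constant multiple of $\cos^{2\nu}\theta$, which is integrable on $[0,\tfrac{\pi}{2}]$ precisely because $2\nu>-1$, so the interchange is legitimate by monotone convergence (for general $x$ one repeats the estimate with absolute values). Each resulting integral is a Beta integral,
\[
\int_{0}^{\frac{\pi}{2}}\cos^{2\nu+2m}\theta\,\sin^{2}\theta\,d\theta=\frac{1}{2}\,B\!\left(\nu+m+\tfrac12,\tfrac32\right)=\frac{\sqrt{\pi}}{4}\cdot\frac{\Gamma\!\left(\nu+m+\tfrac12\right)}{\Gamma(\nu+m+2)},
\]
using $\Gamma(\tfrac32)=\tfrac{\sqrt{\pi}}{2}$. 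Combining this with the prefactor $\tfrac{4}{\pi}$ collapses all constants to $\tfrac{1}{\sqrt{\pi}}$ and produces exactly the series in \eqref{serint}. (Specializing $\nu=n$ and using $\Gamma(k+\tfrac12)=\tfrac{(2k)!}{4^{k}k!}\sqrt{\pi}$ recovers Theorem \ref{th1}, which I would record as a sanity check.)

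It remains to widen the range of the series from $\nu>-\tfrac12$ to $\nu>-1$, and here I would argue by analytic continuation in $\nu$. For each fixed $x$ the left-hand side $\Delta_{\nu}(x)=I_{\nu}^{2}(x)-I_{\nu-1}(x)I_{\nu+1}(x)$ is real-analytic in $\nu$ on $(-1,\infty)$, since $\nu\mapsto I_{\nu}(x)$ is. On the other hand, Stirling's formula shows the $m$th term of the series is $O\!\bigl(x^{2m}/(m!)^{2}\bigr)$ times a fixed power of $m$, uniformly for $\nu$ in compact subsets of $(-1,\infty)$, so the series and its $\nu$-derivatives converge locally uniformly there; the only apparent obstruction is at $\nu=-\tfrac12$, where the matching poles of $\Gamma(\nu+m+\tfrac12)$ and $\Gamma(2\nu+m+1)$ cancel (for $m=0$), leaving a real-analytic function of $\nu$ on all of $(-1,\infty)$. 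Since the two real-analytic functions agree on $(-\tfrac12,\infty)$ by the previous steps, the identity theorem forces them to agree on $(-1,\infty)$.

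The part I expect to require the most care is the bookkeeping at the endpoint $\nu=-\tfrac12$: checking that Neumann's formula is genuinely applicable in both of its uses near the borderline, that the factor $\cos^{2\nu}\theta$, singular at $\theta=\tfrac{\pi}{2}$ for $-\tfrac12<\nu<0$, still admits an integrable majorant so that the term-by-term integration is valid, and that the passage to $\nu>-1$ must go through analyticity rather than through the integral formula, since $\tfrac{4}{\pi}\int_{0}^{\pi/2}I_{2\nu}(2x\cos\theta)\sin^{2}\theta\,d\theta$ ceases to converge as soon as $\nu\le-\tfrac12$.
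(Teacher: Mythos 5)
Your derivation of the integral representation (Neumann's formula with $\alpha=\beta=\nu$ and with $\alpha=\nu+1$, $\beta=\nu-1$, then $1-\cos2\theta=2\sin^2\theta$) and of the series (term-by-term integration of the MacLaurin expansion against the Beta integral $\tfrac12 B(\nu+m+\tfrac12,\tfrac32)$) is exactly the paper's argument, and your justification of the interchange of sum and integral is a point the paper leaves implicit. The only place you genuinely diverge is the extension of the series to $\nu>-1$: you argue by analytic continuation in $\nu$, checking local uniform convergence and the cancellation of the poles of $\Gamma(\nu+m+\tfrac12)$ and $\Gamma(2\nu+m+1)$ at $\nu=-\tfrac12$, whereas the paper rederives the series directly from the Cauchy-product expansion $I_{\nu}(x)I_{\mu}(x)=(x/2)^{\nu+\mu}\sum_{m\ge0}(\nu+\mu+m+1)_m(x/2)^{2m}/\bigl(m!\,\Gamma(\nu+m+1)\Gamma(\mu+m+1)\bigr)$ combined with the Legendre duplication formula, which is valid for all $\nu>-1$ without any continuation argument. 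Both routes are sound; the paper's is a one-line computation once the product formula is quoted, while yours is self-contained given only the Neumann formula but requires the (correct, and correctly flagged) bookkeeping at $\nu=-\tfrac12$ and the observation that the integral representation itself fails for $\nu\le-\tfrac12$.
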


\begin{proof}
In view of the Neumann formula \cite[eq. 10.32.15]{nist}
$$I_{\mu}(x)I_{\nu}(x)=\frac{2}{\pi}\int_0^{\frac{\pi}{2}}I_{\mu+\nu}(2x\cos\theta)\cos((\mu-\nu)\theta)d\theta, \quad \mu+\nu>-1,$$
we have for $\nu>-\frac{1}{2}$ and $x\in\mathbb{R}$
\begin{align*}
\Delta_{\nu}(x)&=\frac{2}{\pi}\int_0^{\frac{\pi}{2}}I_{2\nu}(2x\cos\theta)d\theta-\frac{2}{\pi}\int_0^{\frac{\pi}{2}}I_{2\nu}(2x\cos\theta)\cos(2\theta)d\theta\\
&=\frac{4}{\pi}\int_0^{\frac{\pi}{2}}I_{2\nu}(2x\cos\theta)\sin^2\theta d\theta\\
&=\frac{4}{\pi}\int_0^{\frac{\pi}{2}}\sin^2\theta \sum_{m\geq0}\frac{(x\cos\theta)^{2m+2\nu}}{m!\Gamma(m+2\nu+1)}d\theta\\
&=\frac{4}{\pi}\sum_{m\geq0}\frac{x^{2m+2\nu}}{m!\Gamma(m+2\nu+1)}\int_0^{\frac{\pi}{2}}\sin^2\theta \cos^{2m+2\nu}\theta d\theta\\
&=\frac{2}{\pi}\sum_{m\geq0}\frac{B\left(\frac{3}{2},\nu+m+\frac{1}{2}\right)x^{2m+2\nu}}{m!\Gamma(m+2\nu+1)}\\
&=\frac{1}{\sqrt{\pi}}\sum_{m\geq0}\frac{\Gamma\left(\nu+m+\frac{1}{2}\right)x^{2m+2\nu}}{m!\Gamma(\nu+m+2)\Gamma(2\nu+m+1)}.
\end{align*}
The series representation can be obtained also by using the Cauchy product \cite[eq. 10.31.3]{nist}
$$I_{\nu}(x)I_{\mu}(x)=\left(\frac{x}{2}\right)^{\nu+\mu}\sum_{m\geq0}\frac{(\nu+\mu+m+1)_m\left(\frac{x}{2}\right)^{2m}}{m!\Gamma(\nu+m+1)\Gamma(\mu+m+1)}.$$
Namely, by using this formula and the Legendre duplication formula for the gamma function we get
\begin{align*}\Delta_{\nu}(x)=\sum_{m\geq0}\frac{(2\nu+m+1)_m}{m!}\frac{\left(\frac{x}{2}\right)^{2\nu+2m}}{\Gamma(\nu+m+1)\Gamma(\nu+m+2)}
=\frac{1}{\sqrt{\pi}}\sum_{m\geq0}\frac{\Gamma\left(\nu+m+\frac{1}{2}\right)x^{2\nu+2m}}{m!\Gamma(\nu+m+2)\Gamma(2\nu+m+1)}.\end{align*}
\end{proof}

\subsection{Bounds for $\Delta_{\nu}(x)$} From this result in view of the Legendre duplication formula \cite[eq. 5.5.5]{nist} $$2\sqrt{\pi}\Gamma(2x)=4^x\Gamma(x)\Gamma\left(x+\frac{1}{2}\right)$$ and the recurrence relation $\Gamma(x+1)=x\Gamma(x),$ it is clear that for $\nu>-1$ and $x\in\mathbb{R}$ we have
\begin{equation}\label{turan4}
\Delta_{\nu}(x)>\frac{1}{\sqrt{\pi}}\frac{\Gamma\left(\nu+\frac{1}{2}\right)x^{2\nu}}{\Gamma(\nu+2)\Gamma(2\nu+1)}=
\frac{\left(\frac{1}{4}x^{2}\right)^{\nu}}{\Gamma(\nu+1)\Gamma(\nu+2)}
\end{equation}
and
\begin{equation}\label{turan5}
\Delta_{\nu}(x)>\frac{1}{\sqrt{\pi}}\frac{\Gamma\left(\nu+\frac{1}{2}\right)x^{2\nu}}{\Gamma(\nu+2)\Gamma(2\nu+1)}+
\frac{1}{\sqrt{\pi}}\frac{\Gamma\left(\nu+\frac{3}{2}\right)x^{2\nu+2}}{\Gamma(\nu+3)\Gamma(2\nu+2)}=
\frac{\left(\frac{1}{4}x^{2}\right)^{\nu}}{\Gamma(\nu+1)\Gamma(\nu+2)}+\frac{2\left(\frac{x}{2}\right)^{2\nu+2}}{\Gamma(\nu+1)\Gamma(\nu+3)},
\end{equation}
and clearly we can have more sharper lower bounds for $\Delta_{\nu}(x)$ by taking more terms of the series in \eqref{serint}. Observe that
the Tur\'an type inequality \eqref{turan4} coincides with the inequality \eqref{turan3} of Kalmykov and Karp, while the Tur\'an type inequality \eqref{turan5}
is the extension of \eqref{turan2} to real variable $\nu.$

\subsection{Some remarks on $\Delta_{\nu}(x)$} From Theorem \ref{th1} and its proof yields that the Tur\'anian of the Bessel function $J_n$ has
a similar structure as the Tur\'anian $\Delta_n(x)$ in Theorem \ref{th1}, with the difference that the series of the Tur\'anian of $J_n$ has alternating coefficients. Hence it is immediate that for all $n$ natural number and real $x$ we have
$$J_n^2(x)-J_{n-1}(x)J_{n+1}(x) < I_n^2(x)-I_{n-1}(x)I_{n+1}(x).$$
Moreover, it is possible to prove the counterpart of this inequality as
$$J_n^2(x)-J_{n-1}(x)J_{n+1}(x)> \frac{J_n^2(x)}{I_n^2(x)}\left(I_n^2(x)-I_{n-1}(x)I_{n+1}(x)\right),$$
and even for real order. Namely, if $j_{\nu,n}$ stands for the $n$th positive zero of the Bessel function of the first kind $J_{\nu},$ then by using the relations (see \cite{bams,baricz,bp,joshi,skov} for more details)
$$\frac{1}{4I_{\nu}^2(x)}\left(I_{\nu}^2(x)-I_{\nu-1}(x)I_{\nu+1}(x)\right)=\sum_{n\geq 1}\frac{j_{\nu,n}^2}{(x^2+j_{\nu,n}^2)^2}$$
and
$$\frac{1}{4J_{\nu}^2(x)}\left(J_{\nu}^2(x)-J_{\nu-1}(x)J_{\nu+1}(x)\right)=\sum_{n\geq 1}\frac{j_{\nu,n}^2}{(x^2-j_{\nu,n}^2)^2},$$
we obtain that for $\nu>-1$ and $x\neq0$ the next Tur\'an type inequality is valid
$$J_{\nu}^2(x)-J_{\nu-1}(x)J_{\nu+1}(x)> \frac{J_{\nu}^2(x)}{I_{\nu}^2(x)}\left(I_{\nu}^2(x)-I_{\nu-1}(x)I_{\nu+1}(x)\right).$$

\subsection{Monotonicity of $\Delta_{\nu}(x)$ with respect to $\nu$} Since the function $x\mapsto x^{x+\frac{1}{2}}$ is log-convex on $(0,\infty)$ for
$\nu>1$ we have the inequality $\nu^{2\nu+1}<(\nu-1)^{\nu-\frac{1}{2}}(\nu+1)^{\nu+\frac{3}{2}},$ and hence the right-hand side of the asymptotic expansion \eqref{deltasim} is positive, and it is actually decreasing for large $\nu.$ This suggest that $\nu\mapsto\Delta_{\nu}(x)$ has a similar behavior. Indeed, owing to Watson \cite{swatson} we know that the function $x\mapsto I_{\nu+1}(x)/I_{\nu}(x)$ is increasing on $(0,\infty)$ for $\nu\geq -\frac{1}{2},$ and consequently, the function $x\mapsto I_{\nu-1}(x)/I_{\nu}(x)$ is decreasing on $(0,\infty)$ for $\nu\geq \frac{1}{2}.$ By using this and the recurrence relation $2I_{\nu}'(x)=I_{\nu-1}(x)+I_{\nu+1}(x)$ twice we obtain
$$\left(\frac{I_{\nu-1}(x)}{I_{\nu}(x)}\right)'=\frac{I_{\nu-1}'(x)I_{\nu}(x)-I_{\nu-1}(x)I_{\nu}'(x)}{I_{\nu}^2(x)}=
\frac{\Delta_{\nu}(x)-\Delta_{\nu-1}(x)}{2I_{\nu}^2(x)}\leq0$$ for $x>0$ and $\nu\geq \frac{1}{2}.$ This, in particular implies that
the sequence $\{\Delta_n(x)\}_{n\geq0}$ is decreasing for fixed $x$ real number. Moreover, since the function $\nu\mapsto I_{\nu}(x)$ is decreasing on $[0,\infty)$ for each $x>0$ fixed, according to Cochran \cite{co}, by using the first part of \eqref{serint} we obtain that $\nu\mapsto \Delta_{\nu}(x)$ is
decreasing on $[0,\infty)$ for $x>0$ fixed.

\end{document}